\documentclass[12pt]{amsart}
\textwidth=16cm
 \oddsidemargin=0cm \evensidemargin=0cm
\textheight=20.5cm \topmargin=0.4cm 
\setlength{\parindent}{3em}
\setlength{\parskip}{1em}
\usepackage{graphics,wrapfig, graphicx, mathrsfs,xcolor}

\usepackage{amsfonts,amsthm,amsmath,amssymb,amscd,mathrsfs,graphicx,xcolor,subfig,tikz}
\usepackage{graphics}
\usepackage{indentfirst}
\usepackage{cite}
\usepackage{bm, enumerate,xcolor}
\usepackage[dvips]{epsfig}
\usepackage{latexsym}
\usepackage{float}
\usepackage{mathtools}
\usepackage{amssymb,amscd,graphicx,xcolor,subfig,tikz}
\usepackage{graphics}
\usepackage{indentfirst}
\usepackage{bm, enumerate,xcolor}
\usepackage[dvips]{epsfig}
\usepackage{latexsym}
\usepackage{float}
\usepackage{amsmath}
\usepackage{amssymb}
\newtheorem{lemma}{Lemma}[section]
\newtheorem{theorem}{Theorem}[section]

\newtheorem{remark}{Remark}[section]

\numberwithin{equation}{section} \numberwithin{theorem}{section}
\numberwithin{example}{section} \numberwithin{remark}{section}
\numberwithin{figure}{section} \numberwithin{algorithm}{section}
\mathtoolsset{showonlyrefs}

\def\ba{\begin{array}}
\def\ea{\end{array}}
\def\bma{\left(\begin{matrix}}
\def\ema{\end{matrix}\right)}
\def\be{\begin{equation}}
\def\ee{\end{equation}}

\def\dfrac{\displaystyle\frac}

\pagestyle{plain}

\title{On the rigidity of uniformly rotating vortex patch near the Rankine vortex}
\author{Yupei Huang}
\begin{document}
\maketitle
\begin{abstract}
    In this paper, we study the uniformly rotating vortex patch solutions for the 2D incompressible Euler equations. Specifically, we prove that if the patch solution is close to the Rankine vortex in a certain weak topology, it is either a Kirchhoff ellipse or a Rankine vortex.  
\end{abstract}
\section{introduction}
In this paper, we investigate certain rigidity aspects of the 2D incompressible Euler equations in $\mathbb{R}^2$. The 2D incompressible Euler equations are a widely used model in fluid dynamics to describe the motion of water. They can be written in the vorticity form as shown below:
\begin{equation}\label{euler}
    \begin{aligned}
    &\omega_{t}+u\cdot \nabla \omega=0,\\
    &u=\nabla^{\perp}\Delta^{-1}\omega,\\
    &\omega(t,x)=\omega_{0}(x),
    \end{aligned}
\end{equation}
here $\omega=\partial_2 u_1-\partial_1 u_2$ is the vorticity of the fluid velocity $u$ and $\nabla^{\perp}=(-\partial_2,\partial_1)$.
\par By the classical results of Yudovich (\cite{YUDOVICH19631407}), if we start with the initial data $\omega_0 \in L^{1}\cap L^{\infty}(\mathbb{R}^2)$, there is a global in-time solution $\omega \in L^{1}\cap L^{\infty}(\mathbb{R}^2)$. 
One fundamental question regarding the 2D incompressible Euler equations concerns the investigation of the long-term behaviors of generic solutions as time $t$ approaches infinity ($t \rightarrow +\infty$). In particular, within the context of a compact domain, there is a famous conjecture given by Shnirelman (\cite{shnirelman2013long}), see also \cite{drivas2023singularity} for further reference: 
\begin{itemize}
    \item redthe weak limit set of generic $L^{\infty}$ solutions is a compact set in $L^2$ topology that is invariant under the flow of Euler equations.
\end{itemize} \par There have recently been several interesting studies (\cite{modin2020casimir}, \cite{modin2022canonical}) aimed at understanding the structure of the compact set mentioned in the conjecture. However, a concrete characterization of this invariant set is still missing in the literature. What is certain is that this set should contain specific solutions for the 2D Euler equations, such as steady states, periodic solutions. To confirm a general solutions converge weakly to steady states or periodic solutions is a challenging problem, for global evolution of a general solution is notoriously hard to track. The state of the art is to investigate the global behavior of solution sufficiently close to certain steady states such as shear flows. In the work of \cite{ionescu2020inviscid}, inspired by \cite{bedrossian2015inviscid}, the authors studied solutions sufficiently close to the Couette flow in $\mathbb{T}\times [0,1]$ and demonstrated that their weak limits are still shear flows. Furthermore, in \cite{Masmoudi2020NonlinearID} established that the inviscid damping phenomenon, as detailed in \cite{bedrossian2015inviscid}, applies to solutions sufficiently close to general monotone shear flows in $\mathbb{T}\times [0,1]$. In the process of inviscid damping, phase mixing causes the transfer of information to smaller scales, resulting in the loss of enstrophy in the weak limit. The results mentioned above are all about the long-term behavior in the compact domain.\par
The study of the asymptotic behaviors of Euler equations on $\mathbb{R}^2$ presents significant challenges. For instance, one potential scenario for loss of enstrophy involves data escaping to spatial infinity.  The current understanding, to the author's knowledge, is limited to either nonlinear results near specific stationary solutions ( \cite{Ionescu2020NonlinearID}), or to linear results ( \cite{bedrossian2019vortex,ionescu2022linear}).\par
To better understand the global behaviors of 2D Euler solutions on $\mathbb{R}^2$, researchers have focused on compactly supported vortex patch solutions (\cite{drivas2023twisting, marchioro1994bounds}). Classical Yudovich theory (\cite{YUDOVICH19631407}) guarantees the global well-posedness of these patch solutions. However, even in this simpler context, characterizing their long-term behaviors remains an outstanding open problem. In this paper, we examine a special family of periodic patch solutions of the 2D Euler equations, referred to as V-states. These V-states have already attracted considerable attention in the fluid dynamics community, and we will detail the related results in the next subsection. 
\subsection{The V-states and main results}
The V-states constitute a family of special solutions to \eqref{euler}, taking the form:
\begin{equation}\label{Vstate}
\begin{aligned}
&\omega(t,x)=\chi_{\mathcal{D}_{t}}(x),\
&\mathcal{D}_{t}=x_0+e^{i\Omega t} \mathcal{D},
\end{aligned}
\end{equation}
where $\chi_{\mathcal{D}_{t}}$ is the characteristic function associated with the rotating set $\mathcal{D}_{t}$. In this context, $x_0$ represents the rotating center and $\Omega$ denotes the angular velocity. Explicit examples of V-states are relatively rare and include the Rankine vortex (where the patch is a disk), the annulus vortex patch, and the Kirchhoff ellipse (characterized by the vortex patch taking the shape of an ellipse).\par It was an open question in the nineteenth century to find examples of V-states other than those previously mentioned. The existence of \(m\)-fold symmetric V-states, also known as Kelvin \(m\) waves, was first suggested by Lord Kelvin in 1880 (see \cite{lamb1945dover}, p. 231). An argument for the existence of Kelvin \(m\) waves was provided in \cite{burbea1982motions}. This proof was later rigorously justified in \cite{hmidi2013boundary} using the Crandall-Rabinowitz bifurcation theorem. Since then, more examples of V-states have been constructed based on bifurcation theory (see \cite{castro2016uniformly,hmidi2016bifurcation, hmidi2016degenerate,hmidi2015rotating}). It should be noted that these V states all have smooth boundaries; As of now, no examples of compactly supported V-states with rough boundaries are known.\par
In addition to constructing examples of V-state solutions to \eqref{euler}, researchers also investigated their rigidity properties. In \cite{fraenkel2000introduction}, the moving plane method was employed to demonstrate that if \(\Omega=0\), and the patch is simply connected and smooth, then the V-state must be the Rankine vortex. Similarly, Hmidi in \cite{hmidi2015trivial} applied the same method to show that in the aforementioned setting, if \(\Omega<0\), the V-state is necessarily the Rankine vortex. More recently, in \cite{gomez2021symmetry}, the authors utilized the variational formulation of V-state solutions, and applied continuous Steiner symmetrization techniques to prove that for a simply connected compactly supported uniformly rotating patch solution with Lipshitz boundary, if \(\Omega\leq 0\) or \(\Omega\geq \frac{1}{2}\), then the patch must be a Rankine vortex. The analysis in the case where \(0<\Omega<\frac{1}{2}\) is considerably more subtle, as discussed in \cite{hassainia2020global}. As mentioned in the previous paragraph, in this regime, even the linearized operator near the Rankine vortex can be complex, potentially leading to nonradial \(m\)-fold symmetric V-states. In \cite{park2022quantitative}, the Liouville properties of the \(m\)-fold symmetric, simply-connected V-states were explored. Variational analysis was used to demonstrate that for a general family of \(m\)-fold symmetric V-states, which satisfy  reasonable symmetry constraints, the angular velocity \(\Omega\) must be greater than \(\frac{1}{2}-\frac{C}{m}\) for a universal positive constant \(C\).
\par
In this paper, we also investigate the rigidity of the V-state solutions where $0<\Omega<\frac{1}{2}$. We first show that if the V-state is close to Rankine vortices in a certain ``strong'' topology, then the V-state should be either a disk or an ellipse.\begin{theorem}\label{Gomez}
Let $\mathcal{D}$ be a V-state with angular velocity $\Omega$. Assume that the patch boundary $\partial \mathcal{D}$ can be described as a $C^{1,\frac{1}{2}}$ graph of its argument $\theta$: \begin{equation}
    R=R(\theta).
\end{equation}  
For sufficiently small $\delta$,
    if \begin{equation}\label{closeness}
        |R(\theta)-1|_{C^{1,\frac{1}{2}}(\mathbb{T})}\leq \delta,
    \end{equation}
    and \begin{equation}
        \left|\Omega-\frac{1}{4}\right|\leq \delta,
    \end{equation}
then $\mathcal{D}$ is a rotating disk or a rotating ellipse.
\end{theorem}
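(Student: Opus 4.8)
The plan is to perform a Lyapunov--Schmidt reduction of the contour dynamics (V-state) equation at the bifurcation point associated with the Rankine vortex and the angular velocity $\Omega=\tfrac14$, and then to identify the resulting finite-dimensional bifurcation equation with the explicit branch of Kirchhoff ellipses. Writing $R(\theta)=1+f(\theta)$, one rephrases ``$\mathcal D$ is a V-state of angular velocity $\Omega$'' as $F(\Omega,f)=0$, where $F$ is the contour dynamics functional coming from the condition that the relative stream function be constant on $\partial\mathcal D$. After a small dilation and translation (which alter neither $\Omega$ nor the property of being a disk or an ellipse) one may normalise the centroid of $\mathcal D$ to the origin and $\hat f(0)=0$, so that $f$ lies in the closed subspace $X\subset C^{1,\frac12}(\mathbb T)$ of functions with vanishing Fourier modes $n=0,\pm1$; as is standard in the V-state literature, $F$ then maps into a Hölder space $Y$ of functions with the same vanishing modes. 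By the boundary-regularity machinery for rotating patches (\cite{hmidi2013boundary}), $F$ is real analytic on a neighbourhood of $(\tfrac14,0)$ in $\mathbb R\times X$, is equivariant under plane rotations and reflections, and satisfies $F(\Omega,0)\equiv0$ (the Rankine vortex).

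The key computation is the linearisation $L_\Omega:=\partial_f F(\Omega,0)$, which is a Fourier multiplier $L_\Omega e^{in\theta}=\lambda_n(\Omega)e^{in\theta}$ with the classical Kelvin dispersion relation $\lambda_n(\Omega)=\Omega-\frac{|n|-1}{2|n|}$. Hence $\lambda_{\pm2}(\tfrac14)=0$, whereas $|\lambda_n(\Omega)|\ge\frac1{12}-|\Omega-\tfrac14|$ for all $|n|\ge3$, which stays bounded away from $0$ once $\delta$ is small. Therefore $L_\Omega$ is Fredholm of index $0$ with $\ker L_{1/4}=N:=\mathrm{span}_{\mathbb C}\{e^{2i\theta},e^{-2i\theta}\}$, and $L_\Omega$ restricts to an isomorphism of the complement $X'\subset X$ (the modes $|n|\ge3$) onto the corresponding $Y'\subset Y$, uniformly in $\Omega$ near $\tfrac14$. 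Splitting $X=N\oplus X'$, $Y=N\oplus Y'$ and applying the implicit function theorem to the $Y'$-component of $F(\Omega,v+w)=0$ yields a real analytic map $w=\phi(\Omega,v)$ defined near $(\tfrac14,0)$ with $\phi(\Omega,0)=0$ and $\partial_v\phi(\Omega,0)=0$. Taking $\delta$ below the radius of validity of this reduction, every $\mathcal D$ as in the theorem corresponds to a zero of the reduced map $G(\Omega,v):=\Pi_N F(\Omega,v+\phi(\Omega,v))$, where $G\colon\mathbb R\times N\to N$ is real analytic with $G(\Omega,0)=0$ and $\partial_v G(\Omega,0)=\lambda_2(\Omega)\,\mathrm{Id}_N=(\Omega-\tfrac14)\,\mathrm{Id}_N$.

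It remains to solve $G=0$ near $(\tfrac14,0)$. Exploiting the $O(2)$-equivariance of $G$ --- rotations act on $N\cong\mathbb C$ by $a\mapsto e^{2i\alpha}a$, reflections by $a\mapsto\bar a$ --- one writes $G(\Omega,a)=a\,q(\Omega,|a|^2)$ with $q$ real-valued, real analytic, and $q(\Omega,0)=\Omega-\tfrac14$; the zero set of $G$ is thus $\{a=0\}\cup\{q(\Omega,|a|^2)=0\}$. The branch $a=0$ is the rotating disk. For the second branch, $\partial_\Omega q(\tfrac14,0)=\partial_\Omega\lambda_2(\tfrac14)=1\ne0$, so by the implicit function theorem $\{q=0\}$ is, in the variables $(\Omega,s=|a|^2)$ near $(\tfrac14,0)$, a single analytic curve $\{\Omega=\Omega_*(s)\}$. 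On the other hand the Kirchhoff ellipses of area $\pi$, with axes aligned, have real $n=\pm2$ coefficient $a=t$ and angular velocity $\Omega=(4\cosh^2 r)^{-1}$, an analytic and even function of $t$ equal to $\tfrac14$ at $t=0$; this furnishes a curve of zeros of $q$ through $(\tfrac14,0)$, which must therefore coincide with $\{\Omega=\Omega_*(s)\}$. Undoing the symmetry reduction, every zero of $G$ near $(\tfrac14,0)$ is either $a=0$, i.e.\ $\mathcal D$ is a rotating disk, or a point of the Kirchhoff curve, i.e.\ $\mathcal D$ is a rotating ellipse. This is the assertion.

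I expect the principal difficulty to lie in the first part: installing the correct functional framework in the low-regularity class $C^{1,\frac12}$ and checking that $F$ is real analytic there with the stated Fredholm properties of $L_\Omega$ --- this means importing and adapting the analysis of \cite{hmidi2013boundary} without the customary $m$-fold symmetry, and carefully disposing of the $n=0,\pm1$ modes through the area and centre-of-rotation normalisations so that the trivial branch is the single solution $R\equiv1$. The bifurcation step, by contrast, is comparatively soft: the decisive transversality $\partial_\Omega q(\tfrac14,0)\ne0$ is handed to us by the Kelvin dispersion relation, so one never has to expand the reduced equation past its linear part, and the mere existence of the Kirchhoff branch then determines the entire local solution set.
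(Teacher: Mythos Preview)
Your approach is essentially the paper's: set up the contour equation $\mathcal F(\Omega,V)=0$, perform a Lyapunov--Schmidt reduction at $(\tfrac14,0)$ where the kernel is $\mathrm{span}\{\cos 2\theta,\sin 2\theta\}$, use the rotational symmetry to cut the two-dimensional kernel down to one, and then identify the unique nontrivial bifurcating curve with the explicit Kirchhoff family. The only difference is packaging: the paper carries out the symmetry step by rotating so that the kernel projection lies on $\cos 2\theta$, then restricts to the even subspace $C^{1,\frac12}_{even}$ and invokes the uniqueness half of Crandall--Rabinowitz (the transversality $\mathcal F_{\Omega V}(0,\cos 2\theta)\notin\mathrm{Range}(\mathcal K)$ is exactly your $\partial_\Omega q(\tfrac14,0)=1$), whereas you use the $O(2)$-equivariant normal form $G(\Omega,a)=a\,q(\Omega,|a|^2)$ directly; these are equivalent standard routes.

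One small caveat: the paper works in $C^{1,\frac12}_{center}$ (all modes $k\ge 1$) for both domain and target, removing only the mean via the rescaling $R\mapsto R/\lambda$. Your assertion that after centroid normalisation $F$ lands in the subspace with vanishing $n=\pm1$ modes is not obvious without an $m$-fold symmetry hypothesis, and the centroid condition on $\mathcal D$ does not literally kill $\hat f(\pm1)$ (it is a nonlinear constraint in $f$). This does no damage, however, since $\lambda_{\pm1}(\tfrac14)=\tfrac14\neq 0$: simply retain the $n=\pm1$ modes as the paper does and the reduction goes through unchanged.
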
 The proof of Theorem \ref{Gomez} is greatly inspired by the bifurcation analysis in the remarkable works \cite{castro2016existence,hmidi2013boundary}. Once we have Theorem \ref{Gomez}, then by building some quantitative estimates on the V-states, we show that the rigidity persists even in a much weaker topology.
\begin{theorem}\label{main1}
    Let $\mathcal{D}$ be a V-state with a compact, simply connected support, rotating with angular velocity $\Omega$. Assume that the V-state satisfies the following three conditions:\begin{itemize}
        \item The center of rotation is close to the origin: $\displaystyle\left|\int_{\mathcal{D}}(x_1,x_2)dx_1dx_2\right|\leq \delta;$
        \item The angular velocity is close to $\frac{1}{4}$:
        $\displaystyle\left|\Omega-\frac{1}{4}\right|\leq \delta$;
    \item 
    The shape of the vortex patch is close to the unit disk $\mathbb{D}$: $\displaystyle \text{Area}(\mathbb{D}\Delta \mathcal{D})\leq \delta$. Here, $\Delta$ denotes the symmetric difference operation of two sets.  
\end{itemize}
If the parameter $\delta$ is small enough, then the V-state $\mathcal{D}$ is either a rotating disk or an ellipse.
\end{theorem}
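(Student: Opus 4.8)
\emph{Proof strategy.} The plan is to deduce Theorem \ref{main1} from Theorem \ref{Gomez} by upgrading the weak, measure-theoretic hypotheses to the strong $C^{1,\frac12}$ closeness of the boundary graph required there; the bridge is a handful of quantitative estimates built on the elliptic structure of the co-rotating stream function. Write $\psi=\Delta^{-1}\chi_{\mathcal D}$, $\psi_0=\Delta^{-1}\chi_{\mathbb D}$, and set $\Psi=\psi-\tfrac{\Omega}{2}|x|^2$, $\Psi_0=\psi_0-\tfrac{\Omega}{2}|x|^2$ for the corresponding co-rotating stream functions, so that the V-state condition is exactly that $\Psi$ is constant, say $\Psi\equiv c$, on $\partial\mathcal D$. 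Since $\Delta\Psi=1-2\Omega>0$ in $\mathcal D$ (using $0<\Omega<\tfrac12$) and $\partial\mathcal D$ is a smooth Jordan curve for a simply connected V-state with $\Omega\notin\{0,\tfrac12\}$, the strong maximum principle and the Hopf lemma give that $\mathcal D$ is precisely a connected component of the sublevel set $\{\Psi<c\}$ and that $\nabla\Psi\neq0$ along $\partial\mathcal D$. These two structural facts are the backbone of what follows.

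\emph{Step 1 (a priori confinement).} First I would show $\mathcal D\subset B_{R_0}$ for a universal $R_0$. From $\mathrm{Area}(\mathbb D\Delta\mathcal D)\le\delta$ we get $|\mathcal D|\le\pi+\delta$ and a point $p_0\in\mathcal D$ with $|p_0|<\tfrac12$. Let $\rho^\ast=\max_{\overline{\mathcal D}}|x|$ and pick $x^\ast\in\partial\mathcal D$ with $|x^\ast|=\rho^\ast$; using $\log|x^\ast-y|\le\log(2\rho^\ast)$ for $y\in\mathcal D$ yields $c=\Psi(x^\ast)\le\log(2\rho^\ast)-\tfrac{\Omega}{2}\rho^{\ast2}$, while the Newtonian potential of the bounded density $\chi_{\mathcal D}$ gives $\Psi(p_0)\ge-C_0$ for a universal $C_0$. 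Since $p_0\in\mathcal D\subset\{\Psi<c\}$ we obtain $-C_0<c\le\log(2\rho^\ast)-\tfrac{\Omega}{2}\rho^{\ast2}$, and since the right-hand side tends to $-\infty$ as $\rho^\ast\to\infty$, this forces $\rho^\ast\le R_0$. Consequently $f:=\chi_{\mathcal D}-\chi_{\mathbb D}$ is supported in a fixed ball, with $\|f\|_{L^1}\le\delta$ and $\|f\|_{L^\infty}\le1$.

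\emph{Step 2 (quantitative Biot--Savart closeness) and Step 3 (level-set identification).} The potential estimate $\|\Delta^{-1}f\|_{C^0(B_{R_0})}\lesssim\|f\|_{L^1}(1+|\log\|f\|_{L^1}|)$ gives $\|\psi-\psi_0\|_{C^0(B_{R_0})}\lesssim\delta|\log\delta|$, while $\Delta(\psi-\psi_0)=f\in L^\infty$ with norm $\le1$ yields, by elliptic regularity, a \emph{uniform} bound $\|\psi-\psi_0\|_{C^{1,\alpha}(B_{R_0})}\le C(\alpha,R_0)$ for each $\alpha<1$; interpolating and adding $\tfrac{\Omega}{2}|x|^2$ (absorbing $|\Omega-\tfrac14|\le\delta$) gives $\|\Psi-\Psi_0\|_{C^{1,1/2}(B_{R_0})}\lesssim\delta^{\gamma}$ for some $\gamma>0$. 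An explicit computation shows $\Psi_0$ is radial, with $|\nabla\Psi_0|$ and $\partial_r\Psi_0$ bounded below on an annulus $A$ around $\partial\mathbb D$, so by $C^1$-closeness the same holds for $\Psi$ and $\{\Psi=c\}\cap A$ is a single $C^{1,1/2}$ graph $\{r=\rho(\theta)\}$ with $\|\rho-1\|_{C^{1,1/2}(\mathbb T)}\lesssim\delta^{\gamma}+|c-c_0|$, where $c_0:=\Psi_0|_{\partial\mathbb D}$. The area constraint ($|\mathcal D|=\pi+O(\delta)$, $\mathcal D$ bounded and simply connected) then pins $c$ within $O(\delta^{\gamma})$ of $c_0$ and, together with connectedness of $\partial\mathcal D$ and the sublevel-set structure, identifies $\partial\mathcal D$ with exactly this near-$\partial\mathbb D$ graph, ruling out both the spurious ``outer'' level curve of $\Psi_0$ (whose interior would have area $>\pi$) and any escape from $A$. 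The center-of-mass hypothesis lets us translate the rotation center to the origin at cost $O(\delta)$. Thus $\partial\mathcal D$ is a $C^{1,1/2}$ graph with $|R(\theta)-1|_{C^{1,1/2}(\mathbb T)}\lesssim\delta^{\gamma}$ and $|\Omega-\tfrac14|\le\delta$, so for $\delta$ small enough Theorem \ref{Gomez} applies and $\mathcal D$ is a rotating disk or ellipse.

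\emph{Main obstacle.} The delicate part is the passage from ``weak'' to ``confined, then graph'': Steps 1 and 3 together must exclude escape to spatial infinity and the latching of $\partial\mathcal D$ onto the outer level curve of $\Psi_0$, and the only mechanism available is the sublevel-set/maximum-principle structure combined with the quantitative potential estimates — making the constants fit so that a single smallness of $\delta$ closes every estimate is the crux. A secondary technical point is securing enough a priori boundary regularity of the V-state to invoke the Hopf lemma and write $\partial\mathcal D$ as a level set; this is supplied by the regularity theory for simply connected V-states with $\Omega\notin\{0,\tfrac12\}$.
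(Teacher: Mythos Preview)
Your proposal is correct and follows essentially the same route as the paper: reduce to rotation center at the origin, compare the relative stream function $\Psi$ with the explicit one $\Psi_0$ for the unit disk, obtain an a priori confinement of $\mathcal D$, use the radial non-degeneracy of $\Psi_0$ in an annulus to write $\partial\mathcal D$ as a polar graph $r=R(\theta)$, upgrade to $C^{1,1/2}$ smallness by interpolation, and then invoke Theorem~\ref{Gomez}. The one notable technical difference is in how the quantitative closeness of $\Psi$ to $\Psi_0$ is obtained: the paper uses a Steiner-type inequality $\int_E |x-y|^{-1}\,dy \le C\sqrt{|E|}$ applied to $E=\mathcal D\Delta\mathbb D$ to get directly $|\nabla(\Psi-\Psi_0)|_{L^\infty}\le C\sqrt\delta$, whereas you go through a $C^0$ potential bound $\lesssim \delta|\log\delta|$ plus a uniform $C^{1,\alpha}$ elliptic estimate and interpolate. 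The paper's gradient bound is a bit sharper and makes the confinement and annulus-trapping arguments shorter (it immediately pins $\partial\mathcal D$ into $\{\,1-C\sqrt\delta\le |x|\le 1+C\sqrt\delta\,\}$, so there is no need to discuss a spurious outer level curve or to appeal to the Hopf lemma), but both versions close. Your added structural remarks (sublevel-set description via the strong maximum principle, a priori boundary regularity) are not needed in the paper's argument, though they do no harm.
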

We may also adapt our analysis to the more general case. We state the conclusion in the two remarks below without proof (since the proof follows verbatim from the proof of Theorem \ref{main1}).
\begin{remark}\label{main2}
     Let $\mathcal{D}$ be a V-state with  simply-connected and compact support, rotating with angular velocity $\Omega$. For a fixed $m$, assume the V-state $\mathcal{D}$ satisfies \begin{itemize}
         \item The center of rotation is close to the origin: $\displaystyle\left|\int_{\mathcal{D}}(x_1,x_2)dx_1dx_2\right|\leq \delta;$
        \item The angular velocity is close to $\frac{m-1}{2m}$:
        $\displaystyle\left|\Omega-\frac{m-1}{2m}\right|\leq \delta$;
    \item 
    The shape of the vortex patch is close to the unit disk $\mathbb{D}$: $\displaystyle \text{Area}(\mathbb{D}\Delta \mathcal{D})\leq \delta$. 
    \end{itemize}
    If the parameter $\delta$ is sufficiently small, then $\mathcal{D}$ is either a rotating Kelvin $m$-wave or a disk.
\end{remark}
\begin{remark}\label{main3}
     Let $\mathcal{D}$ be a V state with a compact and simple-connected support, rotating with angular velocity $\Omega$.  For a fixed $\Omega_0\notin \{\frac{1}{2}\}\cup\{\frac{m-1}{2m},m\in \mathbb{N}\}$, assume the V-state $\mathcal{D}$ satisfies \begin{itemize}
        \item The center of rotation is close to the origin: $\displaystyle\left|\int_{\mathcal{D}}(x_1,x_2)dx_1dx_2\right|\leq \delta;$
        \item The angular velocity is close to $\Omega_0$:
        $\displaystyle\left|\Omega-\Omega_0\right|\leq \delta$;
    \item 
    The shape of the vortex patch is close to the unit disk $\mathbb{D}$: $\displaystyle \text{Area}(\mathbb{D}\Delta \mathcal{D})\leq \delta$. 
\end{itemize}
If the parameter $\delta$ is sufficiently small, then $\mathcal{D}$ is a rotating disk.
\end{remark}
Here, we want to mention the quantity $\int_{\mathcal{D}}(x_1,x_2)dx_1dx_2$ in the following remark.
\begin{remark}\label{Center of vorticity}
    One may notice that $\int_{\mathcal{D}}(x_1,x_2)dx_1dx_2$ represents the location of the center of vorticity. In fact, in the case of a uniformly rotating patch, the rotating center is exactly the center of vorticity.
\end{remark}\begin{proof}[Proof of Remark \ref{Center of vorticity}]
Let $y$ be the center of rotation.
\begin{equation}
    \begin{aligned}
        &\quad\frac{2\pi}{\Omega}\int_{\mathcal{D}}zdzd\Bar{z}=\int_{0}^{\frac{2\pi}{\Omega}}\int_{\mathbb{R}^2}zw(z,0)dzd\Bar{z}dt\\&
        =\int_{0}^{\frac{2\pi}{\Omega}}\int_{\mathbb{R}^2}zw(z,t)dzd\Bar{z}dt=\int_{0}^{\frac{2\pi}{\Omega}}\int_{y+e^{i\Omega t}(\mathcal{D}-y)}zdzd\Bar{z}dt\\&=\int_{0}^{\frac{2\pi}{\Omega}} \int_{e^{i\Omega t}(\mathcal{D}-y)}(y+z)dzd\Bar{z}dt=\int_{0}^{\frac{2\pi}{\Omega}}\int_{D-y}(y+e^{-\Omega t}z)dzd\Bar{z}dt
    \end{aligned}
\end{equation}
Then by Fubini's Theorem, by first integrating in time, we have the following. \begin{equation}
    \frac{2\pi}{\Omega}\int_{\mathcal{D}}zdzd\Bar{z}=\frac{2\pi}{\Omega}|\mathcal{D}|y,
\end{equation}
we then finish the proof of remark.
  \end{proof}
Note that to prove Theorem \ref{main1}, it suffices to study the case where the center of rotation of $\mathcal{D}$ is the origin. From now on, unless specifically indicated, the rotating center of $\mathcal{D}$ will always assume to be the origin.
\subsection{Organization of the paper}
The rest of the paper will be organized as follows.  In Section 2, we establish the framework for the bifurcation analysis for the Rankine vortex and prove Theorem \ref{Gomez}.  The proof of some supporting facts will be left in the appendix. Then, in Section 3, we prove Theorem \ref{main1}. We will start by providing some ``coarse" estimates on the vortex patch via the relative stream function. In particular, we can prove the boundary of the patch can be parameterized by its augment. Then via analysis of the contour equation describing the V-states, we can prove the V-states are close to the Rankine vortex in some fine topology. As a consequence, Theorem \ref{Gomez} implies the desired symmetry property. In the appendix, we will collect and prove certain facts that we use throughout the paper. 
\subsection{Notations}
Throughout this paper, we reserve some characters for certain quantities according to the following rules:
\begin{itemize}
\item $C$, generic constants independent of the parameter $\delta$ which might change line to line.
    \item $\Delta$: the symmetric difference of two sets:
    \begin{equation}
        A\Delta B=(A-B)\cup (B-A). 
    \end{equation}
    \item $||$, in this paper, $||$ could mean the absolute value of a number, the area of a set, or the functional norm of a function.
   \item $\mathbb{D}$: the unit disk in $\mathbb{R}^2$.
   \item $\mathbb{T}$: the flat torus: the interval $[-\pi,\pi]$ with endpoints identified. 
   \item $\chi_{A}$, the characteristic function for $A$.
   \item $C_{center}^{1,\frac{1}{2}}:=\{f\in C^{1,\frac{1}{2}}(\mathbb{T}),f=\sum_{k\geq 1} a_k \sin{kx}+b_{k}\cos{kx}\}$  and the norm is the $C^{1,\frac{1}{2}}(\mathbb{T})$ norm.
   \item $C_{center}^{\frac{1}{2}}:=\{f\in C^{\frac{1}{2}}(\mathbb{T}),f=\sum_{k\geq 1} a_k \sin{kx}+b_{k}\cos{kx}$  and the norm is the sum of the $C^{\frac{1}{2}}(\mathbb{T})$ norm.  
   \item  $C_{even}^{1,\frac{1}{2}}:=\{f\in C^{1,\frac{1}{2}}(\mathbb{T}),f=\sum_{k\geq 1} a_{k}\cos{kx}\}$ and the norm is the $C^{1,\frac{1}{2}}(\mathbb{T})$ norm.
   \item $C_{odd}^{\frac{1}{2}}:=\{f\in C^{\frac{1}{2}}(\mathbb{T}),f=\sum_{k\geq 1} a_{k}\sin{kx}\}$ and the norm is the $C^{\frac{1}{2}}(\mathbb{T})$ norm.
\end{itemize}
\section{Bifurcation analysis near the Rankine vortex and the proof of Theorem \ref{Gomez}}
In this section, we perform a bifurcation analysis near the Rankine vortex and finish the proof of Theorem \ref{Gomez}.\par
In the context of Theorem \ref{Gomez}, the boundary of the patch is given by $R(\theta)$ where $\theta$ is the polar angle and we have $|R-1|_{C^{1,\frac{1}{2}}(\mathbb{T})}\leq \delta$. We will perform a bifurcation analysis for the contour equation in $C^{1,\frac{1}{2}}(\mathbb{T})$. Here, the analysis is greatly inspired by the framework of \cite{castro2016existence} and \cite{castro2016uniformly}.\par  
We define

$C_{center}^{1,\frac{1}{2}}:=\{f\in C^{1,\frac{1}{2}}(\mathbb{T}),f=\sum_{k\geq 1} a_k \sin{kx}+b_{k}\cos{kx}\}$  and the norm is the $C^{1,\frac{1}{2}}(\mathbb{T})$ norm, and $C_{center}^{\frac{1}{2}}:=\{f\in C^{\frac{1}{2}}(\mathbb{T}),f=\sum_{k\geq 1} a_k \sin{kx}+b_{k}\cos{kx}$  and the norm is the $C^{\frac{1}{2}}(\mathbb{T})$ norm.  
 Letting $\lambda:=\frac{1}{2\pi}\int_{0}^{2\pi}R(x)dx$, we have the following. \begin{equation}\label{stablity of first shell 2}
    |\lambda-1|\leq  C\delta.
\end{equation}
    Based on \eqref{stablity of first shell 2}, taking $R\rightarrow \frac{R}{\lambda}$, we can assume that $V:=R-1\in C_{center}^{1,\frac{1}{2}} $. 
    Correspondingly, we may parameterize the boundary of the rotating patch as \begin{equation}\label{contourGomez}
          \begin{bmatrix}
        &z_1(x,t)\\
        &z_2(x,t) 
    \end{bmatrix}= \begin{bmatrix}
        \cos{\Omega t} & \sin{\Omega t}  \\
        - \sin{\Omega t} & \cos{\Omega t} 
    \end{bmatrix} \begin{bmatrix}
        &(1+V(x))\cos{x}\\
        &(1+V(x))\sin{x}
    \end{bmatrix}.
    \end{equation}
    with $|V|_{C_{center}^{1,\frac{1}{2}}}\leq C\delta$.
    Now, with the form \eqref{contourGomez},
    we may write the contour equation of the rotating patch in terms of $V$ and $\Omega$.
    Now, we define \begin{equation}\label{F}
    \begin{aligned}
         &F_1(V):=\frac{1}{4 \pi}\int_{\mathbb{T}}\sin{(x-y)}\ln{\bigg((V(x)-V(y))^2+4(1+V(x))(1+V(y))\sin^2{\frac{x-y}{2}}\bigg)}\\&
         \bigg( (1+V(x))(1+V(y))+V^{'}(x)V^{'}(y)\bigg)dy,
    \end{aligned}
    \end{equation}
    \begin{equation}
    \begin{aligned}
         &F_2(V):=\frac{1+V(x)}{4 \pi}\int_{\mathbb{T}}\cos{(x-y)}\ln{\bigg((V(x)-V(y))^2+4(1+V(x))(1+V(y))\sin^2{\frac{x-y}{2}}\bigg)}\\&
         \bigg( V^{'}(y)-V^{'}(x)\bigg)dy,
    \end{aligned}
    \end{equation}
     \begin{equation}
    \begin{aligned}
         &F_3(V):=\frac{V^{'}(x)}{4 \pi}\int_{\mathbb{T}}\cos{(x-y)}\ln{\bigg((V(x)-V(y))^2+4(1+V(x))(1+V(y))\sin^2{\frac{x-y}{2}}\bigg)}\\&
         \bigg( V(x)-V(y)\bigg)dy,
    \end{aligned}
    \end{equation}
    and \begin{equation}\label{bifurcation}
            \mathcal{F}(\Omega,V):=\Omega(1+V(x)) V^{'}(x)-\sum_{i=1}^{3}F_{i}(V)(x).
    \end{equation}
    The contour equation for rotating patches now reads as \begin{equation}\label{contour}
        \mathcal{F}(\Omega,V)=0(\text{See \cite{castro2016existence} and \cite{castro2016uniformly}.}).
\end{equation}
A similar process as in \cite{castro2016existence} leads to the following.
\begin{lemma}\label{bifurcation1}
For the non-linear functional \eqref{bifurcation} we just defined, we have the following properties:
    \begin{itemize}
    \item[1.]
    $\mathcal{F}(\Omega,0)=0$ for every $\Omega$.
    \item [2.] 
    $\mathcal{F}(\Omega,V):\mathbb{R}\times V^{r}\rightarrow C_{center}^{\frac{1}{2}}$, where $V^{r}$ is an open neighborhood of 0 in $C_{center}^{1,\frac{1}{2}}$.
    \item[3.]
    The partial derivatives $\mathcal{F}_{\Omega}$, $\mathcal{F}_{V}$, and $\mathcal{F}_{\Omega V}$ exist and are continuous.
    \item [4.]  In the case where $\Omega=\frac{1}{4}$, define the linearized operator $\mathcal{L}$ as the linearized operator around the unit disk with the angular velocity $\frac{1}{4}$.
    We have \begin{equation}
        Ker(\mathcal{L})=Span\{(1,0),(0,\sin{2\theta}),(0,\cos{2\theta})\}.
    \end{equation}
\end{itemize} 
\end{lemma}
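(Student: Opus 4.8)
The plan is to read items 1--3 off the definitions \eqref{F}--\eqref{bifurcation} --- the only analytic ingredient being the H\"older estimates for integral operators with logarithmic kernels that are standard in this circle of ideas (\cite{castro2016existence,castro2016uniformly,hmidi2013boundary}) --- and to obtain item 4 by diagonalizing the linearization in the Fourier basis. For item 1, set $V\equiv0$: the term $\Omega(1+V)V'$ vanishes since $V'\equiv0$; $F_2(0)=F_3(0)=0$ because $F_2,F_3$ carry the factors $V'(y)-V'(x)$ and $V'(x)$; and $F_1(0)=\frac{1}{4\pi}\int_{\mathbb{T}}\sin(x-y)\ln\!\big(4\sin^2\tfrac{x-y}{2}\big)\,dy=0$ since, after the substitution $z=x-y$, the integrand is odd on $\mathbb{T}$. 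For item 2, take $V^{r}=\{\,|V|_{C^{1,\frac12}}<\tfrac12\,\}$, shrunk if necessary so that $x\mapsto(1+V(x))e^{ix}$ remains an embedding: then $1+V\ge\tfrac12$, so the argument of the logarithm in \eqref{F} is nonnegative and vanishes only at $x=y$, where it is comparable to $(x-y)^2$; thus the kernels are weakly singular, the $F_i$ are well defined, and they land, together with $\Omega(1+V)V'$, in $C^{\frac12}(\mathbb{T})$ by the cited estimates. That $\mathcal{F}(\Omega,V)$ has zero mean follows because $\int_{\mathbb{T}}\Omega(1+V)V'\,dx=\Omega\int_{\mathbb{T}}\big(V+\tfrac12V^2\big)'\,dx=0$, because the double-integral kernel of $F_1$ is antisymmetric under $x\leftrightarrow y$ (the logarithm and $(1+V(x))(1+V(y))+V'(x)V'(y)$ are symmetric, $\sin(x-y)$ antisymmetric), and because symmetrizing and adding the double integrals defining $F_2$ and $F_3$ produces a kernel multiplied by $(V'(y)-V'(x))+(V'(x)-V'(y))=0$. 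Hence $\mathcal{F}(\Omega,V)\in C^{\frac12}_{center}$.

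Item 3 is differentiation under the integral sign. Since $\mathcal{F}$ depends on $\Omega$ only through the term $\Omega(1+V)V'$, which is linear in $\Omega$, we get $\mathcal{F}_\Omega=(1+V)V'$ and $\mathcal{F}_{\Omega V}[h]=hV'+(1+V)h'$, both manifestly continuous. For $\mathcal{F}_V$ one differentiates the polynomial factors (harmless) and the logarithm; the only point requiring care is that the derivative of $\ln\big((V(x)-V(y))^2+4(1+V(x))(1+V(y))\sin^2\tfrac{x-y}{2}\big)$ in a direction $h$ is a quotient whose numerator vanishes to the same order $(x-y)^2$ along the diagonal as its denominator, hence is a bounded kernel. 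Consequently the differentiated operators retain the mapping property of item 2, and the continuity of $\mathcal{F}_V$ and $\mathcal{F}_{\Omega V}$ in $(\Omega,V)$ follows from the same H\"older estimates. I would relegate this bookkeeping, which parallels \cite{castro2016existence}, to the appendix.

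For item 4, first observe that $\mathcal{F}_\Omega(\tfrac14,0)=[(1+V)V']_{V=0}=0$, so the linearization $\mathcal{L}$ of $\mathcal{F}$ at $(\tfrac14,0)$, regarded as a map on $\mathbb{R}\times C^{1,\frac12}_{center}$ with $(\omega,h)$ the variation of $(\Omega,V)$, satisfies $\mathcal{L}(\omega,h)=\mathcal{F}_V(\tfrac14,0)[h]$; in particular $(1,0)\in\ker\mathcal{L}$ and $\ker\mathcal{L}=\mathbb{R}\times\ker\mathcal{F}_V(\tfrac14,0)$. Next, $\mathcal{F}_V(\tfrac14,0)[h]=\tfrac14 h'(x)-DF_1(0)[h]-DF_2(0)[h]-DF_3(0)[h]$, where $DF_3(0)=0$ because $F_3$ is quadratic in $V$, and, using the cancellations from item 3,
\[
DF_1(0)[h]=\frac{1}{4\pi}\int_{\mathbb{T}}\sin(x-y)\big(h(x)+h(y)\big)\Big(1+\ln\!\big(4\sin^2\tfrac{x-y}{2}\big)\Big)dy,\qquad
DF_2(0)[h]=\frac{1}{4\pi}\int_{\mathbb{T}}\cos(x-y)\ln\!\big(4\sin^2\tfrac{x-y}{2}\big)\big(h'(y)-h'(x)\big)dy .
\]
These are Fourier multipliers; substituting $h=e^{in\theta}$, using $\ln\!\big(4\sin^2\tfrac z2\big)=-2\sum_{k\ge1}\tfrac{\cos kz}{k}$ and orthogonality, a short computation shows that $\mathcal{F}_V(\Omega,0)$ acts on $e^{in\theta}$ ($n\ge1$) by the Fourier symbol $in\big(\Omega-\tfrac{n-1}{2n}\big)$. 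At $\Omega=\tfrac14$ this equals $\tfrac{i(2-n)}{4}$, which vanishes precisely at $n=2$; equivalently $\mathcal{F}_V(\tfrac14,0)$ sends $\cos n\theta\mapsto\tfrac{n-2}{4}\sin n\theta$ and $\sin n\theta\mapsto\tfrac{2-n}{4}\cos n\theta$, hence is invertible on each block $\mathrm{Span}\{\cos n\theta,\sin n\theta\}$ for $n\ne2$ and identically zero on the $n=2$ block. Since this operator is bounded and diagonal in the Fourier basis, $\ker\mathcal{F}_V(\tfrac14,0)=\mathrm{Span}\{\cos2\theta,\sin2\theta\}$ in $C^{1,\frac12}_{center}$, and therefore $\ker\mathcal{L}=\mathrm{Span}\{(1,0),(0,\sin2\theta),(0,\cos2\theta)\}$.

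The genuine technical work lies in item 3 --- the Fr\'echet differentiability and the continuity of the derivatives of the weakly singular operators $F_i$ --- which is standard but tedious, and which I would carry out in the appendix; by contrast the eigenvalue computation of item 4, although it is the conceptual core and singles out $\Omega=\tfrac14$ as the relevant bifurcation value, collapses to the elementary Fourier identity above once those diagonal cancellations are in place.
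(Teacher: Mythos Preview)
Your proposal is correct and follows essentially the same route as the paper: items 1--3 are handled by direct inspection together with the standard H\"older estimates for logarithmic-kernel operators (deferred, as in the paper, to \cite{castro2016existence,castro2016uniformly}), and item 4 by diagonalizing $\mathcal{F}_V(\tfrac14,0)$ in the Fourier basis. The only differences are presentational: your antisymmetry argument for the zero-mean of $\mathcal{F}(\Omega,V)$ is more explicit than what the paper provides, and for the symbol you use the expansion $\ln(4\sin^2\tfrac z2)=-2\sum_{k\ge1}\tfrac{\cos kz}{k}$ directly, whereas the paper first integrates by parts to reduce to the conjugate-function kernel $\cot\tfrac{y-x}{2}$ before expanding --- both computations land on eigenvalues vanishing exactly at $n=2$.
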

 We will give the proof of Lemma \ref{bifurcation1} in the appendix.
After finishing setting the bifurcation analysis, we are ready to prove Theorem \ref{Gomez}.
\begin{proof}[Proof of Theorem \ref{Gomez}]
Define $Kf:=\mathcal{F}_{V}f$ at the point $(\frac{1}{4},0)$.
We perform the Lyapunov Schmidt reduction as in \cite{kielhofer2012bifurcation} and write $C_{center}^{1,\frac{1}{2}}=Span\{\sin{2\theta},\cos{2\theta}\}\oplus Z$ and $C_{center}^{\frac{1}{2}}=Range K \oplus W$. 
For a uniformly rotating patch $(\Omega,V)$ near the disk, with $|\Omega-\frac{1}{4}|\leq \delta$ and $|V|_{C_{center}^{1,\frac{1}{2}}}\leq C \delta$, we have the following.\begin{equation}
    Q:=\mathbb{P}_{ker \mathcal{L}} (\Omega-\frac{1}{4},V)=(\Omega-\frac{1}{4},a \sin{2\theta}+b \cos{2\theta}),\text{ for some numbers $a$ and $b$.}
\end{equation} 
Since \eqref{contour} is rotational invariant in $V$, we may apply a suitable rotation so that   \begin{equation}
    \mathbb{P}_{ker \mathcal{L}} (\Omega-\frac{1}{4},V)=(\Omega-\frac{1}{4},c \cos{(2\theta)}),\text{for some real number $c$}.
\end{equation} 
Then, we consider the Lyapunov-Schmidt reduction map:
\begin{equation}
    Mf:=\mathbb{P}_{\text{Range}(K)}F \bigg((\Omega-\frac{1}{4},c \cos{2\theta})+(\frac{1}{4},f)\bigg). 
\end{equation}
From \eqref{contour}, we have  \begin{equation}\label{Bifurcation Paul}
    Mf=0.
\end{equation}
Then
M is a mapping from $V_{1}^{r}$ to $\text{Range} (K)$, where $V_{1}^{r}$ is a small neighborhood of $0$ in Z. Based on Lemma \ref{bifurcation1}, for  a sufficiently small given $(\Omega-\frac{1}{4},c\cos{2\theta})$, the linearized map of $M$ is bijective. Based on the contraction mapping theorem, for a given small enough $(\Omega-\frac{1}{4},c\cos{2\theta})$, there is a unique $f$ to \eqref{Bifurcation Paul} in $V_{1}^{r}$ independent of $(\Omega-\frac{1}{4},c \cos{2\theta})$(by shrinking $V_{1}^{r}$, if necessary). Furthermore, the solution $f$ that we just constructed would satisfy the requirement. \begin{equation}
    |f|_{C_{center}^{1,\frac{1}{2}}}\leq C\delta.
\end{equation}
We now define a ``even symmetric" subset of $C_{center}^{1,\frac{1}{2}}$:
    $C_{even}^{1,\frac{1}{2}}:=\{f\in C^{1,\frac{1}{2}}(\mathbb{T}),f=\sum_{k\geq 1} a_{k}\cos{kx}\}$ and the norm is the $C^{1,\frac{1}{2}}(\mathbb{T})$ norm. We also define the ``odd" symmetric subset of $C^{\frac{1}{2}}(\mathbb{T})$: $C_{odd}^{\frac{1}{2}}(\mathbb{T}):=\{f\in C^{\frac{1}{2}}(\mathbb{T}),f=\sum_{k\geq 1} a_{k}\sin{kx}\}$ and the norm is the $C^{\frac{1}{2}}(\mathbb{T})$ norm. 
    Due to the symmetry of $\mathcal{F}$, we can restrict the study of $\mathcal{F}$ in the regime from $\mathbb{R}\times \Tilde{V}^{r} $ to $C_{odd}^{\frac{1}{2}}$, where $\Tilde{V}^{r}$ is a neighborhood of $0$ in $C_{even}^{1,\frac{1}{2}}$. Again, by the same Lyapunov reduction trick, there is a unique $f \in \Tilde{V}_{r}$ to \eqref{Bifurcation Paul}, for any sufficiently small $(\Omega-\frac{1}{4},c\cos{2\theta})$. As a consequence, for the full equation without symmetry reduction, due to the special structure of $Ker \mathcal{L}$, by performing a suitable rotation, we may assume that $V \in C_{even}^{1,\frac{1}{2}}$.\par
\par Priorly, for $|\Omega-\frac{1}{4}|\leq C\delta$, there are solutions to \eqref{contour}, which are ellipses satisfying $|R(\theta)-1|_{C_{even}^{1,\frac{1}{2}}}\leq C\delta$.
   Moreover, as we restrict the study of \eqref{contour} in the space $\mathbb{R}\times C_{even}^{1,\frac{1}{2}}$, it can be verified that $\mathcal{F}$ near $(\frac{1}{4},0)$ satisfies the required conditions in the Crandall-Rabnowitz theorem (see Theorem \ref{kernerl1} in the appendix). Then the rigid perspectives of the Crandall-Rabnowitz theorem in \cite{crandall1971bifurcation} imply that the $C_{even}^{1,\frac{1}{2}}$ solutions mentioned above must be an ellipse or a disk. We then finish the proof.  
\end{proof}
\section{quantitative estimates regarding vortex patch}
In this section, we will prove the following theorem:\begin{theorem}\label{quan1}
     Let $\mathcal{D}$ be a uniformly rotating patch with angular velocity $\Omega$ with bounded simply-connected support. Assume that $\mathcal{D}$ satisfies  \begin{itemize}
        \item Angular velocity is close to $\frac{1}{4}$:
        $|\Omega-\frac{1}{4}|\leq \delta$; \item The geometric shape for the patch is close to the unit disk:
         $|\mathbb{D}\Delta \mathcal{D}|\leq \delta$.
\end{itemize}
Now, if $\delta$ is sufficiently small,
    then $\partial \mathcal{D}$ can be parameterized by its argument $\theta$: $R=R(\theta)$, with \begin{equation}
        |R-1|_{C^{1,\frac{1}{2}}(\mathbb{T})}\ll 1.
    \end{equation}
\end{theorem}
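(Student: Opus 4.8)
\textbf{Proof proposal for Theorem \ref{quan1}.}

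The plan is to bootstrap from the very weak hypothesis $|\mathbb{D}\,\Delta\,\mathcal{D}|\le\delta$ (together with $|\Omega-\tfrac14|\le\delta$) to a quantitative $C^{1,\frac12}$ control of the boundary, using the relative stream function associated with the rotating patch. Recall that in the rotating frame the patch $\mathcal{D}$ is a steady state of the Euler equation with stream function $\Psi(x)=\Psi_0(x)-\tfrac{\Omega}{2}|x|^2$, where $\Psi_0=\Delta^{-1}\chi_{\mathcal{D}}$ is the Newtonian potential of the patch; the boundary $\partial\mathcal{D}$ is a level set of $\Psi$, and on $\partial\mathcal{D}$ the quantity $\Psi$ is constant while $\nabla^\perp\Psi$ is tangent to $\partial\mathcal{D}$. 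First I would record the ``coarse'' estimates: since $\|\chi_{\mathcal{D}}-\chi_{\mathbb{D}}\|_{L^1}\le\delta$, elliptic regularity for the Newtonian potential gives $\|\Psi_0-\Psi_{0,\mathbb{D}}\|_{W^{2,p}_{loc}}$, hence $\|\nabla\Psi_0-\nabla\Psi_{0,\mathbb{D}}\|_{C^{0,\alpha}_{loc}}$, small in terms of $\delta$ (any fixed $\alpha<1$, say $\alpha=\tfrac12$), where $\Psi_{0,\mathbb{D}}$ is the explicit potential of the unit disk. Consequently the relative stream function $\Psi$ is $C^{1,\frac12}$-close, on a fixed neighborhood of the unit circle, to the radial profile $\Psi_{\mathbb{D}}(x)=\Psi_{0,\mathbb{D}}(x)-\tfrac14|x|^2$, which near $|x|=1$ is strictly monotone in $r=|x|$ with non-degenerate radial derivative $\partial_r\Psi_{\mathbb{D}}\ne 0$.

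The second step is to promote this to a graph description. Because $\partial_r\Psi_{\mathbb{D}}$ is bounded away from $0$ on an annulus $\{1-\eta\le r\le 1+\eta\}$ and $\Psi$ is $C^1$-close to $\Psi_{\mathbb{D}}$, we get $\partial_r\Psi\ne 0$ on that annulus for $\delta$ small; in particular each radial ray meets the level set $\{\Psi=c\}$ (where $c$ is the boundary value) at most once there. One must also confine $\partial\mathcal{D}$ to this annulus: the area hypothesis forces all but a $\delta$-fraction of $\mathcal{D}$ to lie in $\mathbb{D}$ and vice versa, and combined with the level-set characterization and the monotonicity of $\Psi$ this rules out the boundary escaping to $r\le 1-\eta$ or $r\ge 1+\eta$ — here one uses that $\mathcal{D}$ is open, connected, and bounded, so $\partial\mathcal{D}$ is a single curve separating $\{\Psi<c\}$ from $\{\Psi>c\}$, and a component of $\partial\mathcal{D}$ far from the circle would contradict the small symmetric difference. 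Hence $\partial\mathcal{D}=\{r=R(\theta)\}$ with $|R(\theta)-1|\le C\delta^{\beta}$ for some exponent $\beta>0$ (the passage from $L^1$-closeness of indicators to $L^\infty$-closeness of the radius costs a power, via the $C^1$ bound on $\Psi$), and the implicit function theorem applied to $\Psi(R(\theta)e^{i\theta})\equiv c$ gives $R\in C^{1,\frac12}$ with $|R-1|_{C^{1,\frac12}}\le C\,\omega(\delta)$, where $\omega(\delta)\to 0$; this is exactly the claimed $|R-1|_{C^{1,\frac12}}\ll 1$.

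The third step is just bookkeeping on the Hölder seminorm: $R(\theta)$ is obtained by solving $\Psi(R e^{i\theta})=c$, so $R'(\theta)=-\big(\partial_\theta\Psi\big)/\big(\partial_r\Psi\big)$ evaluated on the curve, and since $\Psi\in C^{1,\frac12}$ uniformly (with the $C^{1,\frac12}$ norm of $\Psi-\Psi_{\mathbb{D}}$ small) and $\partial_r\Psi$ is bounded below, the quotient is $C^{0,\frac12}$ with small seminorm after subtracting the radial contribution (for which $R'\equiv 0$). One should be slightly careful that the Newtonian potential of a patch whose boundary is only known to be a graph is a priori only $C^{1,\alpha}$ for $\alpha<1$, not $C^{1,1}$; but $\alpha=\tfrac12$ suffices and is stable under the bootstrap, so there is no circularity. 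I expect the main obstacle to be the \emph{confinement/topology argument} in step two: translating ``small symmetric difference'' plus ``$\partial\mathcal{D}$ is a level set of a function close to a radial monotone one'' into ``$\partial\mathcal{D}$ is a single graph over $\mathbb{T}$ contained in a thin annulus'' requires ruling out spurious components and self-intersections of the level set, and this is where the simple-connectedness and boundedness of $\mathcal{D}$, together with a careful use of the structure of $\{\Psi=c\}$ (a regular level set of a $C^1$ function with non-vanishing gradient is an embedded $1$-manifold), must be invoked honestly rather than waved at.
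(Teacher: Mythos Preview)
Your overall strategy matches the paper's: compare the relative stream function $\Psi$ with the explicit radial profile for the disk, confine $\partial\mathcal{D}$ to a thin annulus, and then read off the graph representation $R=R(\theta)$ via the implicit function theorem and $R'=-\Psi_\theta/\Psi_r$. The technical route to the H\"older smallness differs. The paper uses only the pointwise Steiner-type bound
\[
|\nabla(\Psi-\Psi_{\mathrm{disk}})(x)|\le\frac{1}{2\pi}\int_{\mathcal{D}\Delta\mathbb{D}}\frac{dy}{|x-y|}\le C\sqrt{|\mathcal{D}\Delta\mathbb{D}|}\le C\sqrt{\delta}
\]
to get $|R-1|_{C^1(\mathbb{T})}\le C\sqrt{\delta}$, then uses the $L^\infty$ bound on $\Delta\Psi$ to obtain mere \emph{boundedness} of $\nabla\Psi$ in $C^{0,3/4}$ on the annulus (hence $|R|_{C^{1,3/4}}\le C$, not small), and finally interpolates between small $C^1$ and bounded $C^{1,3/4}$ to conclude $|R-1|_{C^{1,1/2}}\ll 1$. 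Your route via $\|\chi_{\mathcal{D}}-\chi_{\mathbb{D}}\|_{L^p}\le\delta^{1/p}$, Calder\'on--Zygmund, and Morrey embedding gives $[\nabla(\Psi-\Psi_{\mathrm{disk}})]_{C^{0,1/2}}\lesssim\delta^{1/4}$ directly and is arguably more streamlined; just be explicit that it is the $L^p$ norm (via interpolation with $L^\infty$), not the $L^1$ norm alone, that feeds Calder\'on--Zygmund.

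For the confinement step---which you correctly identify as the real obstacle---the paper's argument is short and quantitative, and you should import it rather than argue topologically. By the area hypothesis there exists $x_0\in\partial\mathcal{D}$ with $\big||x_0|-1\big|\le\sqrt{\delta}$; for any other boundary point $x_1$ one has $\Psi(x_0)=\Psi(x_1)=0$, hence
\[
|\Psi_{\mathrm{disk}}(x_1)-\Psi_{\mathrm{disk}}(x_0)|\;=\;|(\Psi-\Psi_{\mathrm{disk}})(x_0)-(\Psi-\Psi_{\mathrm{disk}})(x_1)|\;\le\;|\nabla(\Psi-\Psi_{\mathrm{disk}})|_{L^\infty}\,|x_0-x_1|,
\]
and the explicit strictly monotone radial profile of $\Psi_{\mathrm{disk}}$ forces $\big||x_1|-1\big|\le C\sqrt{\delta}$. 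A preliminary uniform containment $\mathcal{D}\subset B_{R_0}$ (needed so that $|x_0-x_1|$ is bounded) follows from the same identity applied with $|x_1|=\sup_{x\in\mathcal{D}}|x|$, using that $\Psi_{\mathrm{disk}}$ grows quadratically. This replaces your level-set/simple-connectedness hand-wave by a two-line estimate that uses only that $\Psi$ is constant on $\partial\mathcal{D}$.
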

To prove Theorem \ref{quan1}, we first give some quantitative estimates of $\partial \mathcal{D}$ based on the relative stream function. 
In the setting of the vortex $\mathcal{D}$ rotating with angular velocity $\Omega$, there exists a relative stream function $\Psi$(see \cite{hassainia2020global}):
    \begin{equation}\label{relative stream function}
    \begin{aligned}
        &\Delta \Psi(x)=1-2\Omega, \text{for $x\in \mathcal{D}$},\\
        &\Delta \Psi(x)=-2\Omega, \text{for $x \in \mathcal{D}^{c}$},\\
        & \nabla (\Psi(x)+\frac{\Omega |x|^2}{2}) \rightarrow 0,\text{ as $ x\rightarrow \infty$,}\\ &\Psi(x)=0,\text{for $x \in \partial \mathcal{D}$.}      
    \end{aligned}
    \end{equation}
    Various results of rigidity regarding the vortex patch have come from the analysis of the relative stream function. (see \cite{fraenkel2000introduction}).
In this section, 
we will use the existence of the relative stream function for $\mathcal{D}$ to derive certain ``coarse" estimates of $\mathcal{D}$.
\begin{theorem}\label{Size of boundary}
    In the setting of Theorem \ref{quan1}, let $\delta\ll 1$, then for $x \in \partial \mathcal{D}$, \begin{equation}
        1-C\sqrt{\delta}\leq |x| \leq 1+C\sqrt{\delta}.
    \end{equation}
\end{theorem}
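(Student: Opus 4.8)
The plan is to extract pointwise control on $|x|$ for $x\in\partial\mathcal D$ from the relative stream function $\Psi$, using only the crude information $|\mathbb D\Delta\mathcal D|\le\delta$ and $|\Omega-\tfrac14|\le\delta$. First I would write $\Psi$ explicitly as a Newtonian potential: from \eqref{relative stream function},
\begin{equation}
\Psi(x)=\frac{1}{2\pi}\int_{\mathcal D}\ln|x-y|\,dy-\frac{\Omega|x|^2}{2}+c,
\end{equation}
for a constant $c$ fixed by the normalization at infinity, and then I would compare this with the corresponding expression $\Psi_{\mathbb D}$ for the Rankine vortex on $\mathbb D$ with angular velocity $\tfrac14$ (for which $\Psi_{\mathbb D}\equiv 0$ on $\partial\mathbb D$ and is known in closed form). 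The difference $\Psi-\Psi_{\mathbb D}$ splits into a potential-theoretic piece $\tfrac{1}{2\pi}\int_{\mathcal D\Delta\mathbb D}(\pm)\ln|x-y|\,dy$, which is controlled in $L^\infty_{loc}$ (indeed in $C^{0,\alpha}_{loc}$) by $|\mathcal D\Delta\mathbb D|\le\delta$ via a standard estimate on Newtonian potentials of $L^\infty$ densities supported on a small-measure set, plus a term of size $O(\delta)$ coming from $|\Omega-\tfrac14|\le\delta$ together with the boundedness of the support (which itself needs an a priori diameter bound — see the obstacle below). Hence $|\Psi(x)-\Psi_{\mathbb D}(x)|\le C\sqrt\delta$ uniformly on any fixed ball containing $\mathcal D$.

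Next I would use the boundary condition $\Psi=0$ on $\partial\mathcal D$. If $x\in\partial\mathcal D$ then $|\Psi_{\mathbb D}(x)|=|\Psi_{\mathbb D}(x)-\Psi(x)|\le C\sqrt\delta$. Since $\Psi_{\mathbb D}$ vanishes precisely on $\partial\mathbb D=\{|x|=1\}$, is radial, and is nondegenerate there — one computes $\partial_r\Psi_{\mathbb D}\ne 0$ at $r=1$, so $|\Psi_{\mathbb D}(x)|\ge c\big||x|-1\big|$ for $|x|$ in a fixed neighbourhood of $1$ — this forces $\big||x|-1\big|\le C\sqrt\delta$ once we know a priori that every boundary point already lies in that neighbourhood. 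The lower bound $\Psi_{\mathbb D}$ far from $r=1$ stays bounded away from $0$, so no boundary point can be far from $\{|x|=1\}$; combined with simple connectedness of $\mathcal D$ and $|\mathbb D\Delta\mathcal D|\le\delta$ this pins $\mathcal D$ into the annulus $1-C\sqrt\delta\le|x|\le1+C\sqrt\delta$, which is exactly the claim. The exponent $\sqrt\delta$ (rather than $\delta$) is the natural loss: $L^\infty$ control of a Newtonian potential by the measure of the support of an $L^\infty$ density scales like $|\text{supp}|^{1/2}$ in two dimensions (think of $\int_{B_r}\ln|x-y|\,dy\sim r^2\ln r$, and $|B_r|\sim r^2$, giving a logarithmically-better-than-$\sqrt{}$ bound which we absorb into the constant).

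The main obstacle is the a priori diameter bound: the hypotheses only say the support is bounded and has area close to $\pi$, not that it is contained in a fixed large ball, yet the estimate on $\Psi$ and the term $\tfrac{\Omega|x|^2}{2}$ both need such a bound, and the final localization argument needs to rule out thin far-away tentacles of $\mathcal D$. I would handle this by a separate preliminary step: since $\mathcal D$ is simply connected with $|\mathcal D|\le\pi+\delta$ and rotates rigidly, a point $x_*\in\mathcal D$ with $|x_*|$ maximal must still satisfy $\Psi(x_*)\ge 0$ (it is an interior value, $\Psi>0$ inside since $\Delta(\Psi+\tfrac{\Omega|x|^2}{2})=1-2\Omega+2\Omega>0$... more carefully, use that $\Psi\le 0$ on $\mathcal D^c$ near $\partial\mathcal D$ by the sign of $\Delta\Psi$ outside and the decay, hence the comparison at $x_*$), and plug the explicit potential bound — valid with constants depending only on $|\mathcal D|$ for the $\ln$ term and requiring care for the quadratic term — to get $|x_*|\le R_0$ for a universal $R_0$. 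Once $\mathcal D\subset B_{R_0}$, all constants above are universal and the argument closes. An alternative, possibly cleaner, route to the diameter bound is to invoke the known bounds on the support of rotating patches from \cite{hassainia2020global} or \cite{gomez2021symmetry}; I would use whichever gives the cleanest constants.
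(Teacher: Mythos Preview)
Your overall strategy---compare the relative stream function $\Psi$ of $\mathcal{D}$ to that of the unit disk, bound the difference using that its Laplacian is supported on a set of measure $\le\delta$, and then read off $\big||x|-1\big|\le C\sqrt\delta$ from the radial nondegeneracy of the disk stream function at $r=1$---is exactly what the paper does. Two points of comparison.

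First, a minor but useful simplification: the paper takes the reference $\Psi_0$ to be the disk stream function at the \emph{same} angular velocity $\Omega$, not at $\tfrac14$. With this choice $\Delta(\Psi-\Psi_0)$ vanishes identically off $\mathcal D\Delta\mathbb D$, so no residual $O(\delta)$ term from $|\Omega-\tfrac14|$ pollutes the estimate. More importantly, rather than bounding $\Psi-\Psi_0$ in $L^\infty_{loc}$, the paper bounds $\nabla(\Psi-\Psi_0)$ in $L^\infty$ via the Steiner-type estimate $\int_E|x-y|^{-1}\,dy\le C|E|^{1/2}$ of Lemma~\ref{Steiner}; the key point is that this gradient bound holds \emph{uniformly in $x\in\mathbb R^2$}, with no diameter bound needed as input.

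Second, this is precisely what closes the one gap you correctly flag, the a priori diameter bound. Your own attempt there does not quite work as written: the maximum-principle sketch (``$\Psi(x_*)\ge0$ since it is an interior value'') breaks down because a point of maximal $|x|$ in $\overline{\mathcal D}$ lies on $\partial\mathcal D$, where $\Psi=0$; and your ``explicit potential bound'' route founders on the unknown additive constant $c$, which you cannot fix without already having one boundary point at controlled radius. The paper's resolution avoids both issues. Since $|\mathcal D\Delta\mathbb D|\le\delta$, there exists $x_0\in\partial\mathcal D$ with $1-\sqrt\delta<|x_0|<1+\sqrt\delta$; take also $x_1\in\partial\mathcal D$ with $|x_1|=R$ maximal. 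Using $\Psi(x_0)=\Psi(x_1)=0$ together with the uniform bound $|\nabla(\Psi-\Psi_0)|_{L^\infty}\le C\sqrt\delta$ gives
\[
C\sqrt\delta\,(R+1)\ \ge\ \Psi_0(x_0)-\Psi_0(x_1)\ \ge\ \frac{\Omega(R^2-1)}{2}-\frac{\ln R}{2}-C,
\]
which forces $R$ bounded for $\delta$ small. Once the diameter is under control, the rest of your argument and the paper's coincide.
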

\begin{proof}

Due to the rotating vortex assumptions, we can define $\Psi$ as in \eqref{relative stream function}.
    In addition, we define the base relative stream function $\Psi_0$: 
   \begin{align*}
    \Psi_0(x):=&\left\{\begin{array}{cc}
         \frac{(1-2\Omega)(|x|^2-1)}{4}, &\quad 0<|x|<1,\\
         \frac{-\Omega (|x|^2-1)}{2}+\frac{1}{2} \ln{|x|}, &\quad |x|\geq 1.
    \end{array}\right.
\end{align*}
The $\Psi_0$ we just defined above is the relative stream function for the unit disk rotating with angular velocity $\Omega$.
Let $\Psi_1:=\Psi-\Psi_0$, we notice that \begin{equation}\label{stability}
    \Delta(\Psi_1)(x)=\left
    \{\begin{array}{cc}
         1, &\quad x\in \mathcal{D}-\mathbb{D};\\
         -1, &\quad x\in \mathbb{D}-\mathcal{D}.
    \end{array}\right.
\end{equation}
We now give a uniform upper bound for $\mathcal{D}$.
\begin{lemma}\label{Size of boundary1}
   We have a uniform upper bound for $\mathcal{D}$: $\mathcal{D}\subseteq B_{10000}$.  
\end{lemma}
\begin{proof}
    Let $R:=sup\{|x|,x\in \mathcal{D}\}$, we will prove $R<10000$.
    Now, for sufficiently small $\delta$, since $|\mathcal{D}\Delta \mathbb{D}|\leq \delta,$
     we can find a $x_0\in \partial \mathcal{D}$ such that \begin{equation}
        1-\sqrt{\delta}<|x_0|<1+\sqrt{\delta}.
    \end{equation}
    Based on the definition of $R$, there exists a $x_1 \in \partial \mathcal{D}$ such that \begin{equation}
        |x_1|=R.
    \end{equation}
    Since \begin{equation}
        \Psi(x_1)=\Psi(x_0)=0,
    \end{equation}
    we have \begin{equation}\label{error}
        |\nabla(\Psi_1)|_{L^\infty}|x_1-x_0|\geq |\Psi_1(x_1)-\Psi_1(x_0)|\geq \Psi_1(x_1)-\Psi_1(x_0)=\Psi_0(x_0)-\Psi_0(x_1).
    \end{equation}
    By the Steiner-type estimate(see Lemma \ref{Steiner} in the appendix), we have \begin{equation}
        |\nabla(\Psi_1)|_{L^\infty}\leq C\sqrt{\delta}.
    \end{equation}
    Thus, from \eqref{error} we have \begin{equation}
        C\sqrt{\delta}(R+1)\geq \frac{\Omega(R^2-1)}{2}-\frac{\ln{R}}{2}-100.
    \end{equation}
    Thus, for sufficiently small $\delta$, we have $R\leq 10000$.
\end{proof}
 Now, on the basis of Lemma \ref{Size of boundary1}, as a direct consequence of \eqref{error}, we have  \begin{equation}\label{stablity1}
    |\Psi_1(x)-\Psi_1(x_0)|\leq C\sqrt{\delta},\text{for all $x \in \partial \mathcal{D}$.} 
\end{equation}
 As a consequence, \begin{equation}
    |\Psi_0(x)-\Psi_0(x_0)|<C\sqrt{\delta},\text{for $x\in \partial \mathcal{D}$}.
\end{equation}
Using the explicit form of $\Psi_0$, we have the following. \begin{equation}
    1-C\sqrt{\delta}<|x|<1+C\sqrt{\delta},\text{for $x \in \partial \mathcal{D}.$} 
\end{equation}
We finish the proof of Theorem \ref{Size of boundary}.
\end{proof}
Based on the Theorem \ref{Size of boundary}, we can now finish the proof of Theorem \ref{quan1}:  
\begin{proof}[Proof of Theorem \ref{quan1}]
    Again by Lemma \ref{Steiner}, we have \begin{equation}\label{stablity2}
        |\nabla(\Psi-\Psi_0)|_{L^{\infty}}\leq C \sqrt{\delta}.
    \end{equation}
    Then by Theorem \ref{Size of boundary}, the explicit form of $\Psi_0$ and \eqref{stablity2}, for $\delta$ sufficiently small, we have the following control of the derivative of $\Psi$ in the polar coordinates:
    \begin{equation}\label{non-degeneracy}    
            |\Psi_{r}(x)-\frac{1}{4}|\leq C\sqrt{\delta}, |\Psi_{\theta}(x)|\leq C\sqrt{\delta},\text{for all $x \in B_{\frac{4}{3}}-B_{\frac{2}{3}}$. }
    \end{equation}
    Note $\partial \mathcal{D}$ is the level set of $\Psi$, and for $\delta$ sufficiently small, it should lie entirely in $B_{\frac{4}{3}}-B_{\frac{2}{3}}$. From \eqref{non-degeneracy}, by the inverse function theorem, $\mathcal{D}$ can be parameterized by its argument: $R=R(\theta)$, with \begin{equation}\label{contour12}
        R^{'}(\theta)=-\frac{\Psi_{\theta}(R(\theta)e^{i\theta})}{\Psi_{r}(R(\theta)e^{i\theta})}.
    \end{equation} 
    By \eqref{non-degeneracy}, we also have uniform control of  $C^{1}$ norm of $R$,\begin{equation} \label{R C1}
        |R-1|_{C^{1}(\mathbb{T})}\leq C\sqrt{\delta}. 
    \end{equation}
     By standard elliptic estimates and \eqref{non-degeneracy}, $\frac{\Psi_{\theta}}{\Psi_{r}}(x)$ has a uniform control in $C^{\frac{3}{4}}$ norm: \begin{equation}\label{classical1}
         |\frac{\Psi_{\theta}}{\Phi_{r}}|_{C^{\frac{3}{4}}\left(B_{\frac{4}{3}}-B_{\frac{2}{3}}\right)} \leq C.
     \end{equation} 
      \eqref{contour12}, \eqref{R C1}, and \eqref{classical1}  implies \begin{equation}
          |R|_{C^{1,\frac{3}{4}}(\mathbb{T})}\leq C.
      \end{equation}
      From interpolation theorem and again by \eqref{R C1},  we have \begin{equation}
          |R-1|_{C^{1,\frac{1}{2}}(\mathbb{T})}\ll 1,
      \end{equation}
      and we finish the proof.
\end{proof}
\section{Appendix}
We first recall the Steiner-type estimate in \cite{elgindi2022regular}.\begin{lemma}[Steiner type estimate]\label{Steiner} Let $\Omega\subset\mathbb{R}^2$ be bounded and measurable. Then, for every $(x,y)\in\mathbb{R}^2$, we have that
\[\int_{\Omega}\frac{1}{|(x,y)-(x_1,y_1)|}dx_1dy_1\leq C \sqrt{\big |\Omega\big |}.\]
\end{lemma}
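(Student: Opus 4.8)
The plan is to reduce the estimate to an explicit one–dimensional integral via the layer–cake representation, after which the bound falls out of an elementary computation; the extremal configuration is a disk centred at the base point. First I would translate coordinates so that the fixed point $(x,y)$ is the origin (this changes neither $|\Omega|$ nor the integral), write $q=(x_1,y_1)$ and $r=|q|$, and use that $q\mapsto 1/r$ is radially nonincreasing, so its superlevel sets are disks: $\{q:\,1/|q|>t\}=B_{1/t}(0)$ for $t>0$. Applying $1/r=\int_0^{\infty}\chi_{\{1/r>t\}}\,dt$ together with Tonelli's theorem (the integrand is nonnegative) gives
\[
\int_{\Omega}\frac{1}{|q|}\,dq=\int_0^{\infty}\big|\Omega\cap B_{1/t}(0)\big|\,dt .
\]

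Next I would bound the integrand by $\min\bigl(|\Omega|,\,\pi/t^{2}\bigr)$, since $\Omega\cap B_{1/t}(0)$ has measure at most $|\Omega|$ and at most $|B_{1/t}(0)|=\pi/t^{2}$. Splitting the $t$–integral at the value $t_0:=\sqrt{\pi/|\Omega|}$ where the two bounds agree,
\[
\int_0^{\infty}\min\!\Bigl(|\Omega|,\tfrac{\pi}{t^{2}}\Bigr)\,dt=\int_0^{t_0}|\Omega|\,dt+\int_{t_0}^{\infty}\frac{\pi}{t^{2}}\,dt=|\Omega|\,t_0+\frac{\pi}{t_0}=2\sqrt{\pi\,|\Omega|},
\]
so the claim holds with $C=2\sqrt{\pi}$. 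Equivalently, one may invoke the Hardy--Littlewood inequality (bathtub principle): among measurable sets of prescribed measure, $\int_{\Omega}|q|^{-1}\,dq$ is maximised when $\Omega$ is the disk $B_R(0)$ with $\pi R^{2}=|\Omega|$, and then a direct polar computation gives $\int_{B_R(0)}|q|^{-1}\,dq=\int_0^R \tfrac{1}{s}\,2\pi s\,ds=2\pi R=2\sqrt{\pi|\Omega|}$, the same constant. I would present the layer–cake computation as the main argument and mention the rearrangement viewpoint as a remark.

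I do not anticipate a genuine obstacle here: the only points that merit a word of care are the interchange of integrations (justified purely by nonnegativity, no integrability hypothesis needed) and the convergence of the two pieces of the $t$–integral, both immediate from the $\min$ bound — in particular the local integrability of $1/|q|$ near the origin is automatic because $\Omega$ has finite measure. The statement is scaling–consistent (both sides scale like length$^{1}$ under $q\mapsto\lambda q$), which is a useful sanity check on the exponent $\tfrac12$ and on the value of $C$.
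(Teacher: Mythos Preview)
Your argument is correct and complete: the layer--cake reduction together with the trivial bound $|\Omega\cap B_{1/t}(0)|\le\min(|\Omega|,\pi/t^{2})$ gives exactly $2\sqrt{\pi|\Omega|}$, and the rearrangement remark confirms this constant is sharp. There is nothing to fix.

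As for comparison with the paper: the paper does not actually prove this lemma. It is stated in the appendix as a recalled fact from \cite{elgindi2022regular} and used as a black box to control $|\nabla\Psi_1|_{L^\infty}$ in Theorem~\ref{Size of boundary}. So your write--up supplies strictly more than the paper itself does on this point; either the layer--cake computation or the bathtub/Hardy--Littlewood argument you sketch would be the standard proof one finds in the cited reference, and both yield the same optimal constant $C=2\sqrt{\pi}$.
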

For the rest of the appendix, we will give the proof of Lemma \ref{bifurcation1}.
\subsection{On the proof of Lemma \ref{bifurcation1}}
We will divide the proof of Lemma \ref{bifurcation1} into two parts, the first part would be to verify the disk is a solution to \eqref{contour} and the smooth property of the operator $\mathcal{F}$ we defined in \eqref{bifurcation}, namely the 1-3 properties in Lemma \ref{bifurcation1}. Then we get the kernel of the linearized operator $\mathcal{L}$ based on \cite{castro2016uniformly}.
\begin{proof}[Verification of 1-3 properties in Lemma \ref{bifurcation1}]
It is obvious that $\mathcal{F}(\Omega,0)=0$. Now we verify the smoothness of the mapping $\mathcal{F}$, and the $Frech\Acute{e}t$ differentiability of the mapping $\mathcal{F}$.
    We notice that we can decompose $F_1(V)$ into two parts: 
    \begin{equation}
    \begin{aligned}
        &\quad F_{1}(V)(x)\\&=-\frac{1}{4\pi}\int_{\mathbb{T}} \sin{(x-y)}\ln{\left(\dfrac{(V^{'})^2(x)+(1+V(x))(1+V(y))}{(1+V(x))(1+V(y))+\frac{(V(x)-V(y))^2}{4\sin^2{\frac{y-x}{2}}}}\right)}\\&\bigg( (1+V(x))(1+V(y))+V^{'}(x)V^{'}(y)\bigg)dy\\&+\frac{1}{4\pi}\int_{\mathbb{T}}\sin{(x-y)}\ln{(4\sin^2{\frac{x-y}{2}})} \ln{\left((V^{'})^2(x)+(1+V(x))(1+V(y))\right)}\\&\bigg( (1+V(x))(1+V(y))+V^{'}(x)V^{'}(y)\bigg)dy\\&         =\int_{\mathbb{T}} sin(x-y)K_{11}(V)(x,y)dy +\int_{\mathbb{T}} sin(x-y)\ln{(4sin^{2}{\frac{x-y} 2})} K_{12}(V)(x,y)dy
        \\&:=F_{11}(V)(x)+F_{12}(V)(x).
    \end{aligned}
    \end{equation}
    Due to the fact that $K_{11}(V)$ and $K_{12}(V)$ are smooth maps that map $V^{r}$ to $C^{\frac{1}{2}}(\mathbb{T})\times C^{\frac{1}{2}}(\mathbb{T})$, the regularity of $\sin{(x-y)}$ and $\sin{(x-y)}\ln(4 sin^{2}{\frac{y-x}{2}})$ implies that $F_{1}(V)$ is in $C^{\frac{1}{2}}(\mathbb{T})$ and $F_{1}(V)$ is twice $Frech\Acute{e}t$ differentiable in $V$ from $C_{center}^{1,\frac{1}{2}}$ to $C^{\frac{1}{2}}(\mathbb{T})$ (The interested readers may refer to \cite{castro2016uniformly} for more details).
    We may apply the same methods to $F_2(V)$ and $F_3(V)$ and derive that $F_2$ and $F_3$ are twice $Frech\Acute{e}t$ differentiable in $V$ from $C_{center}^{1,\frac{1}{2}}$ to $C^{\frac{1}{2}}(\mathbb{T})$. As a consequence, we have  $\mathcal{F}$ maps $\mathbb{R}\times V^{r}$ to $C_{center}^{1,\frac{1}{2}}$, and $\mathcal{F}_{\Omega}$, $\mathcal{F}_{V}$, $\mathcal{F}_{\Omega V}$ exist and are continuous. 
\end{proof}
The most tricky part in the proof of Lemma \ref{bifurcation1} is to calculate the kernel of the linearized operator $\mathcal{L}$. A statement concerning the kernel of $\mathcal{L}$ has been given in the proof of Theorem 1.2 in \cite{castro2016uniformly}:
\begin{theorem}\label{kernerl1}Let $C_{odd}^{\frac{1}{2}}:=\{f\in C^{\frac{1}{2}}(\mathbb{T}),f=\sum_{k\geq 1} a_{k}\sin{kx}\}$ and the norm is the $C^{\frac{1}{2}}(\mathbb{T})$ norm.  We now
restrict the study of $\mathcal{F}$ in $\mathbb{R}\times C_{even}^{1,\frac{1}{2}}$, and we can prove the following:
\begin{itemize}
     \item[1.]
    $\mathcal{F}(\Omega,0)=0$ for every $\Omega$.
    \item [2.] 
    $\mathcal{F}(\Omega,V):\mathbb{R}\times V_{1}^{r}\rightarrow C_{odd}^{\frac{1}{2}}$, where $V_{1}^{r}$ is an open neighborhood of 0 in $C_{even}^{1,\frac{1}{2}}$.
    \item[3.]
    The partial derivatives $\mathcal{F}_{\Omega}$, $\mathcal{F}_{V}$, and $\mathcal{F}_{V\Omega}$ exist and are continuous.
    \item [4.] 
   Let $\mathcal{K}f:=\mathcal{F}_{V}f$ at the point $(\frac{1}{4},0)$. $\text{ker} \ \mathcal{K}$ and $C_{odd}^{\frac{1}{2}}/Range(\mathcal{K})$ are one-dimensional at $V=0$ and $\Omega=\frac{1}{4}$, and $\text{Ker}\  \mathcal{K}=\text{span}\{\cos{2\theta}\}$.
    \item[5] $\mathcal{F}_{\Omega V}((0,\cos{2\theta}))\notin \text{Range}(\mathcal{K})$ at $(\frac{1}{4},0)$.
\end{itemize} 
\end{theorem}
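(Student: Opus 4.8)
\textbf{Proof proposal for Theorem \ref{kernerl1}.}

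The plan is to reduce everything to the linearized contour equation around the Rankine vortex, for which the computation is classical (this is precisely the Burbea/Hmidi--Mateu--Verdera calculation, carried out in the $C^{1,1/2}$ setting in \cite{castro2016uniformly, hmidi2013boundary}). First I would record that items 1--3 are immediate from the already-verified parts 1--3 of Lemma \ref{bifurcation1}: restricting the domain to $\mathbb{R}\times C_{even}^{1,\frac12}$ and the target to $C_{odd}^{\frac12}$ is consistent because $\mathcal{F}(\Omega,\cdot)$ maps even functions of $x$ (viewed as functions of the polar angle, with the disk as base point) to odd functions — this parity is visible from the structure of \eqref{bifurcation}: the transport term $\Omega(1+V)V'$ and each $F_i(V)$ carry one net derivative or one net factor of $\sin(x-y)$, so they flip parity. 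Smoothness and the existence/continuity of $\mathcal{F}_\Omega,\mathcal{F}_V,\mathcal{F}_{V\Omega}$ then descend from the unrestricted statement with no extra work.

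The heart is item 4: computing $\mathcal{K}=\mathcal{F}_V$ at $(\tfrac14,0)$. I would linearize \eqref{bifurcation} at $V=0$, where the logarithm's argument degenerates to $4\sin^2\frac{x-y}{2}$. Writing $h\in C_{even}^{1,\frac12}$ and expanding $h(x)=\sum_{k\ge1}a_k\cos kx$, the standard identity $\frac{1}{2\pi}\int_{\mathbb{T}}\cos(k y)\ln\!\big(4\sin^2\frac{x-y}{2}\big)\,dy=-\frac{\cos kx}{k}$ (for $k\ge1$) lets me evaluate every convolution integral mode by mode. After collecting the $O(h)$ terms — the contribution from varying the prefactor $(1+V(x))V'(x)$, the contribution from varying the amplitude factors $(1+V(x))(1+V(y))$ inside $F_1$, and the contribution from the variation of the logarithm's argument — the operator acts diagonally in the Fourier basis, of the form
\begin{equation}
\mathcal{K}h=\sum_{k\ge1}a_k\,\Big(\Omega-\tfrac12+\tfrac{1}{2k}\Big)\,k\sin(kx)\Big|_{\Omega=1/4}=\sum_{k\ge1}a_k\Big(\tfrac{1}{2}-\tfrac{k}{2}+\tfrac14 k \cdot 0\Big)\cdots
\end{equation}
i.e. up to an overall nonzero normalization the $k$-th eigenvalue is proportional to $\big(\Omega-\tfrac12\big)k+\tfrac12$, which at $\Omega=\tfrac14$ is $\tfrac12-\tfrac{k}{4}\cdot\!$ — the precise constant I would pin down from \cite{castro2016uniformly}, the point being only that it vanishes for exactly one value of $k$, namely $k=2$. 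Hence $\ker\mathcal{K}=\mathrm{span}\{\cos 2\theta\}$ is one-dimensional; since $\mathcal{K}$ is diagonal with all other eigenvalues nonzero and summable-to-zero decay, its range is the closed codimension-one subspace $\{g\in C_{odd}^{1/2}:\ \widehat{g}(2)=0\}$, so $C_{odd}^{1/2}/\mathrm{Range}(\mathcal{K})$ is one-dimensional. (The mild subtlety that $\mathcal{K}$ loses one derivative, mapping $C^{1,1/2}\to C^{1/2}$, does not affect the Fredholm-type conclusion because the kernel and cokernel are explicit and finite-dimensional; I would cite the corresponding lemma in \cite{castro2016uniformly} rather than re-prove the functional-analytic framework.)

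For item 5, the transversality (Crandall--Rabinowitz) condition, I note that $\Omega$ enters \eqref{bifurcation} only through the single term $\Omega(1+V(x))V'(x)$, so $\mathcal{F}_{\Omega V}$ at $(\tfrac14,0)$ is simply $h\mapsto h'$, the plain derivative. Applying it to the kernel element $\cos2\theta$ gives $-2\sin2\theta$, whose second Fourier coefficient is nonzero; since $\mathrm{Range}(\mathcal{K})=\{\widehat{g}(2)=0\}$, we get $\mathcal{F}_{\Omega V}(0,\cos2\theta)=-2\sin2\theta\notin\mathrm{Range}(\mathcal{K})$, which is exactly item 5. I expect the main obstacle to be the bookkeeping in the linearization of $F_1,F_2,F_3$ — keeping track of which pieces contribute at order $h$ once the base-point logarithm has been split off as in the appendix's decomposition $F_1=F_{11}+F_{12}$ — together with justifying the termwise Fourier computation in the Hölder (rather than $L^2$) topology; both are handled in \cite{castro2016uniformly, hmidi2013boundary}, so I would present the reduction carefully and then quote their eigenvalue computation for the exact constants.
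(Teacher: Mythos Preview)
Your proposal is correct and follows essentially the same route as the paper: items 1--3 are inherited from Lemma~\ref{bifurcation1} via parity, item 4 is obtained by linearizing at $V=0$, diagonalizing in the Fourier basis, and reading off that the $k$-th multiplier (proportional to $(\Omega-\tfrac12)k+\tfrac12$) vanishes exactly at $k=2$ when $\Omega=\tfrac14$, and item 5 follows from $\mathcal{F}_{\Omega V}h=h'$ sending $\cos2\theta$ to $-2\sin2\theta\notin\mathrm{Range}(\mathcal{K})$. The only cosmetic difference is that the paper carries out the Fourier computation by first integrating by parts to reduce $I_2$ to the periodic Hilbert transform (cotangent kernel) and then expanding, whereas you invoke the multiplier identity for $\ln(4\sin^2\tfrac{x-y}{2})$ directly; these are equivalent, and the paper arrives at $\mathcal{K}f=\sum_{k\ge1}(1-\tfrac{k}{2})a_k\sin k\theta$, so you should clean up your garbled displayed formula and state the eigenvalue explicitly rather than deferring the constant to the references.
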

In \cite{castro2016uniformly}, the authors discuss the proof of Theorem \ref{kernerl1} in much more general cases including g-Sqg. Here, for completeness, we will give the verification of Theorem \ref{kernerl1} under the setting of 2D Euler. 
\begin{proof}[Verification of Theorem \ref{kernerl1}]
Recall $\mathcal{F}$ is defined in \eqref{bifurcation}. By the symmetry of the kernel, it is easy to verify $F$ maps $\mathbb{R}\times V_{1}^{r}$ to $C_{odd}^{\frac{1}{2}}$. Similar to what we did in the proof of Lemma \ref{bifurcation1}, we can prove various smooth properties of $\mathcal{F}$. We now proceed to make explicit calculations to finish the rest of the proof. We have the following. \begin{equation}\label{first term}
        \begin{aligned}
            &\quad \mathcal{F}_{1V} (f)\\
            &=\frac{1}{4\pi}\int_{\mathbb{T}}\sin{(x-y)}[f(x)+f(y)]+\sin{(x-y)}\ln{\left(4\sin^2{\left(\frac{y-x}{2}\right)}\right)}[f(x)+f(y)]dy\\
            &=\frac{1}{4\pi}\int_{\mathbb{T}}\sin{(x-y)}f(y)+\sin{(x-y)}\ln{\left(4\sin^2{\left(\frac{y-x}{2}\right)}\right)}f(y)dy.
        \end{aligned}
    \end{equation}
    Similar calculation leads to
    \begin{equation}\label{second term}
    \begin{aligned}
        &\quad \mathcal{F}_{2V}(f)\\&=\frac{1}{4\pi}\int_{\mathbb{T}}cos{(x-y)} \ln{\left(4\sin^2{(\frac{x-y}{2})}\right)}[f^{'}(y)-f^{'}(x)]dy\\&=\frac{1}{4\pi}\int_{\mathbb{T}}cos{(x-y)} \ln{\left(4\sin^2{(\frac{x-y}{2})}\right)}[f^{'}(y)-f^{'}(x)]dy; 
    \end{aligned}
    \end{equation}
    and \begin{equation}\label{third term}
     \mathcal{F}_{3V}(f)=0.
\end{equation}
Now based on \eqref{first term},\eqref{second term} and \eqref{third term}, we have 
\begin{equation}
\begin{aligned}
    &\quad \mathcal{F}_{V} f
    =\frac{-1}{4\pi}\int_{\mathbb{T}}cos{(x-y)} \ln{\sin^2{(\frac{x-y}{2})}}dy f^{'}(x)\\&+\frac{1}{4\pi}\int_{\mathbb{T}} \ln{\left(4\sin^2{(\frac{x-y}{2})}\right)}[f^{'}(y)\cos{(x-y)}+f(y)\sin{(x-y)}]+\sin{(x-y)}f(y)dy
    \\&:=I_1+I_2.
\end{aligned}
\end{equation}
Estimates on $I_1$:
Based on integration by parts, we have 
\begin{equation}\label{I1}
\begin{aligned}
    &\quad\int_{\mathbb{T}} \cos{(x-y)} \ln{\sin^2{(\frac{x-y}{2})}}dy\\&=\int_{\mathbb{T}}D_{y}(-\sin{(x-y)}) \ln{\left(1-\cos{(x-y)}\right)}dy\\&= \int_{\mathbb{T}} D_{y}\left(-\sin{(x-y)}\ln{\left(1-\cos{(y-x)}\right)}\right)dy\\&+\int_{\mathbb{T}}\sin{(x-y)}\frac{\sin{(y-x)}}{1-\cos{(y-x)}}dy\\&=\int_{\mathbb{T}}1+\cos{(y-x)}dy\\&=2\pi.
\end{aligned}
\end{equation}
As a direct consequence of \eqref{I1}, we have \begin{equation}\label{I_1}
    I_1=-\frac{1}{2}f^{'}(x).
\end{equation}
Estimates on $I_2$:
\begin{equation}\label{I2}
\begin{aligned}
    &\quad\int_{\mathbb{T}} \ln{\left(4\sin^2{\frac{x-y}{2}}\right)}[\cos{(x-y)}f^{'}(y)+\sin{(x-y)}f(y)]dy\\&=P.V \int_{\mathbb{T}} \ln{\left(4\sin^2{(\frac{y-x}{2})}\right)} D_{y}[\cos{(x-y)}f(y)]dy\\&=P.V \int_{\mathbb{T}}D_{y}\left(4\ln{\left(\sin^2{(\frac{y-x}{2})}\cos{(y-x)}\right)}f(y)\right)dy-P.V\int_{\mathbb{T}}f(y)\frac{\cos{(\frac{y-x}{2})}\cos{(y-x)}}{\sin{\frac{y-x}{2}}}dy\\&=:
    J_1+J_2.
\end{aligned}
\end{equation}
By the Holder regularity of $f$, we have \begin{equation}\label{J1}
    J_1=0.
\end{equation}
As a result of \eqref{I2} and \eqref{J1}, we have \begin{equation}\label{I_2}
\begin{aligned}
    &\quad I_2=\frac{-1}{4\pi}P.V\int_{\mathbb{T}}f(y)\frac{\cos{(\frac{y-x}{2}})}{\sin{(\frac{y-x}{2}})}dy\\&
    =\frac{-1}{4\pi}\int_{\mathbb{T}}(f(y)-f(x))\frac{\cos{(\frac{y-x}{2}})}{\sin{(\frac{y-x}{2})}}dy.
\end{aligned}
\end{equation} 
Now, use the ansatz \begin{equation}\label{ansatz}
       f(x)=\sum_{k=1}^{\infty}a_k \cos{(kx)}=\sum_{k}a_k \frac{e^{ikx}+e^{-ikx}}{2},
\end{equation}
from \eqref{I_2}, we have \begin{equation}\label{I21}
    \begin{aligned}
        &\quad I_2= \frac{i}{4\pi} \sum_{k=1}^{\infty} \int_{\mathbb{T}} a_{k} \frac{e^{ikx} + e^{-ikx} - e^{iky} - e^{-iky}}{2} 
        \frac{e^{\frac{i(y-x)}{2}} + e^{\frac{-i(y-x)}{2}}}{e^{\frac{i(y-x)}{2}}- e^{\frac{-i(y-x)}{2}}} dy \\
        & = \frac{i}{4\pi} \sum_{k=1}^{\infty} \int_{\mathbb{T}} a_{k} \frac{e^{ikx} + e^{-ikx} - e^{iky} - e^{-iky}}{2} 
        (2 \sum_{j=0}^{\infty} e^{-ij(y-x)} - 1) dy\\&=\sum_{k}a_{k}\sin{kx}.
    \end{aligned}
\end{equation}
Now, based on \eqref{I_1} and \eqref{I21},(again under \eqref{ansatz}) we have \begin{equation}\label{Range}
\mathcal{K}f=\sum_{k=1}^{\infty}(1-\frac{k}{2})a_{k}\sin{k\theta}.
\end{equation}
From \eqref{Range}, we also get $C_{odd}^{\frac{1}{2}}/\text{Range $(\mathcal{K})$}=\text{span}\{\sin{2\theta}\}$.
Thus, $\mathcal{F}_{\Omega V}\left((0,\cos{2\theta})\right)=-2\sin{2\theta}\notin \text{Range}(\mathcal{K})$. 
We now finish the proof of Theorem \ref{kernerl1}.
\end{proof}
Now that we have verified Theorem \ref{kernerl1}, we are ready to finish the proof of Lemma \ref{bifurcation1}.
\begin{proof}[On the Calculations of the Kernel of $\mathcal{L}$]
For a general $f\in C_{center}^{1,\frac{1}{2}}$, we may write \begin{equation}
    f(x)=\sum_{k=1}^{\infty}a_k\cos{k\theta}+b_{k}\sin{k\theta},
\end{equation}
similar calculation in the proof of Theorem \ref{kernerl1} leads to \begin{equation}
    \mathcal{F}_{V}f=\sum_{k=1}^{\infty}(1-\frac{k}{2})a_k\sin{k\theta}-(1-\frac{k}{2})b_{k}\cos{k\theta}.
\end{equation}
Then it is obvious that the kernel of $\mathcal{L}=Span\{(1,0),(0,\sin{2\theta}),(0,\cos{2\theta})\}$.
\end{proof}
\section*{Acknowledgement}
The motivation of this project was uncovered in an enlightening conversation with Xiaoyutao Luo. The author thanks Siming He for providing various suggestions in the writing of the article. Moreover, the author thanks Jamin Park and Yao Yao for inspiring discussions. The author also thanks his advisor Tarek Elgindi for constant support and encouragement. The project is partially supported by the NSF DMS 2304392. 
\bibliographystyle{plain}
\bibliography{main}
\begin{itemize}
    \item Department of mathematics, Duke University, Durham, NC\\
    Email address: yh298@duke.edu
\end{itemize}
\end{document}